\title[Approximation in Sobolev spaces by interpolation]{Approximation in Sobolev spaces by piecewise affine interpolation}
\author{Jean Van Schaftingen}
\address{
  Institut de Recherche en Math\'ematique et Physique\\
  Universit{\'e} catholique de Louvain\\
  Chemin du Cyclotron 2 bte L7.01.01, 1348 Louvain-la-Neuve, Belgium}
\email{Jean.VanSchaftingen@uclouvain.be}
\newcommand{\Rset}{\mathbb{R}}
\newcommand{\norm}[1]{\left\| #1 \right\|}		
\newcommand{\abs}[1]{\lvert #1 \rvert}			
\newcommand{\Bigabs}[1]{\Bigl\lvert #1 \Bigr\rvert}			
\DeclareMathOperator{\diam}{diam}
\newcommand{\Lin}{\mathcal{L}}
\newcommand{\st}{\;:\;}
\newcommand{\dif}{\;\mathrm{d}}
\newtheorem{proposition}{Proposition}[section]
\newtheorem{theorem}{Theorem}
\newtheorem{lemma}[proposition]{Lemma}
\theoremstyle{remark}
\keywords{Sobolev space; piecewise affine function; triangulation; linear interpolation; Lagrange interpolation; approximation; density.}
\subjclass[2010]{46E35 (41A05, 65D05)}
\begin{document}

\begin{abstract}
Functions in a Sobolev space are approximated directly by piecewise affine interpolation in the norm of the space. 
The proof is based on estimates for interpolations and does not rely on the density of smooth functions.
\end{abstract}

\maketitle

\section{Introduction}

Functions in classical Sobolev spaces \(W^{1, p} (\Rset^n)\) of weakly differentiable functions, for \(p \in [1, \infty[\) can be approximated with respect to the Sobolev norm 
\[
  \norm{u}_{W^{1, p} (\Rset^n)} = \Bigl(\int_{\Rset^n} \abs{D u}^p + \abs{u}^p \Bigr)^\frac{1}{p}
\]
by various class of nicer functions.
In order to study Sobolev functions as generalizations of smooth functions, it is natural to approximate by \emph{smooth Sobolev functions} \(C^1 (\Rset^n) \cap W^{1, p} (\Rset^n)\) \cite{MeyersSerrin1964} (see also \citelist{\cite{Friedrichs1944}\cite{Ziemer}*{lemma 2.1.3}\cite{AdamsFournier2003}*{theorem 3.17}}) or by \emph{compactly supported smooth function} (see for example \citelist{\cite{Brezis2011}*{theorem 9.2}\cite{Willem2013}*{theorem 6.1.10}\cite{AdamsFournier2003}*{theorem 3.22}\cite{Tartar2007}*{lemma 6.5}}). In such a way, many properties of Sobolev functions can be proved first by differential calculus argument for smooth maps and then extended by density.

In the context of numerical resolution of partial differential equations by simplicial finite elements methods, a natural class of nice functions is the space of \emph{piecewise affine functions}. These functions are dense in the Sobolev space \(W^{1, p} (\Rset^n)\) \cite{EkelandTemam1976}*{proposition 2.8} (see also \citelist{\cite{Dacorogna2008}*{theorem 12.15}\cite{Ciarlet1978}*{theorem 3.2.3}}).

The usual proof for this statement consists in proving that piecewise affine functions approximate function in a dense subset of \(W^{1, p} (\Omega)\). The latter set can be a set of smooth functions or a higher-order Sobolev space. Sharp bounds on this approximation of smoother functions, which are known as Bramble--Hilbert lemmas, play an important role in the mathematical study of the convergence of finite element methods \citelist{\cite{Ciarlet1978}\cite{BrennerScott2008}}.

This approach in two steps is conceptually disappointing because its solves the problem of approximating by piecewise affine functions by relying on the approximation by smooth functions which is not a priori simpler, and because the diagonal argument hides the construction of the approximation: the approximating functions are piecewise affine functions whose value at vertices of the triangulation are averages in a neighbourhood of the points, and the scale of the averaging and of the triangulation need not be the same.

\bigbreak

The goal of this note is to provide a \emph{direct} approximation of Sobolev functions by \emph{interpolation}.
Given a function \(u : \Rset^n \to \Rset\) and a triangulation \(\mathcal{S}\) of the Euclidean space \(\Rset^n\) in nondegenerate \((n + 1)\)--simplices, the \emph{affine interpolant} with respect to \(\mathcal{S}\) (also known as \emph{Lagrange interpolant}) of \(u\) is the function \(\Pi_{\mathcal{S}} u : \Rset^n \to \Rset\) such that for every \((n + 1)\)--simplex \(\Sigma \in \mathcal{S}\), 
its restriction \(\Pi_{\mathcal{S}} u \vert_{\Sigma}\) is affine and at every vertex \(a\) of \(S\) the values coincide: \((\Pi_{\mathcal{S}} u) (a) = u (a) \).

We shall prove an approximation result that covers the classical Sobolev spaces as well as homogeneous Sobolev spaces.

\begin{theorem}
\label{theoremMain}
Let \(u  : \Rset^n \to \Rset\). If \(u \in L^q (\Rset^n)\), if \(u\) is weakly differentiable and if \(D u \in L^p (\Rset^n)\), then for every \(\varepsilon > 0\) there exists a triangulation \(\mathcal{S}\) of \(\Rset^n\) such that 
\[
  \int_{\Rset^n} \abs{D (u - \Pi_\mathcal{S} u)}^p + 
  \int_{\Rset^n} \abs{D (u - \Pi_\mathcal{S} u)}^q \le \varepsilon.  
\]
\end{theorem}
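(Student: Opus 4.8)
The plan is to obtain \(\mathcal{S}\) by a scaling-and-averaging argument rather than by approximating \(u\) by nicer functions first. Fix once and for all a reference triangulation \(\mathcal{S}_0\) of \(\Rset^n\) into uniformly nondegenerate simplices of mesh size \(1\) (for instance the one obtained by subdividing each cube of the integer lattice into \(n!\) simplices), and for \(\delta > 0\) and \(z \in \Rset^n\) put \(\mathcal{S}_{\delta, z} := z + \delta\, \mathcal{S}_0\); all of these have the same nondegeneracy constant as \(\mathcal{S}_0\). For \(\Sigma \in \mathcal{S}_{\delta, z}\) write \(a^\Sigma_0, \dotsc, a^\Sigma_n\) for its vertices, \(\lambda^\Sigma_0, \dotsc, \lambda^\Sigma_n\) for its barycentric coordinates, and \(\Pi_\Sigma u\) for the affine function interpolating \(u\) at the \(a^\Sigma_j\) (so that \(\Pi_{\mathcal{S}_{\delta, z}} u\) equals \(\Pi_\Sigma u\) on \(\Sigma\)); then \(\abs{D\lambda^\Sigma_j}\, \diam \Sigma \le C(n)\), \(\sum_j D\lambda^\Sigma_j = 0\), and \(\scalprod{D\lambda^\Sigma_i}{a^\Sigma_j - a^\Sigma_0} = \delta_{ij}\) for \(i, j \in \{1, \dotsc, n\}\). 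I would reduce the theorem to showing that, as \(\delta \to 0\), the average over \(z\) in a period cell \([0, \delta]^n\) of \(\int_{\Rset^n} \abs{D(u - \Pi_{\mathcal{S}_{\delta, z}} u)}^p\), and separately that of \(\int_{\Rset^n} \abs{u - \Pi_{\mathcal{S}_{\delta, z}} u}^q\), tends to \(0\); then for \(\delta\) small the average of their sum is \(< \varepsilon\), some \(z\) realises a value \(< \varepsilon\), and one takes \(\mathcal{S} := \mathcal{S}_{\delta, z}\). For this to be meaningful one first checks, by routine Fubini arguments using that the vertices (resp.\ the edges) of the family \((\mathcal{S}_{\delta, z})_z\) sweep out countably many translated lattices (resp.\ countably many parallel lines in finitely many directions), that for a.e.\ \(z\) every vertex of \(\mathcal{S}_{\delta, z}\) is a Lebesgue point of \(u\) --- so \(\Pi_{\mathcal{S}_{\delta, z}} u\) is well defined --- and \(u\) is absolutely continuous with the expected derivative along every edge of \(\mathcal{S}_{\delta, z}\).

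Both averages are controlled by the same mechanism. For the function, on a simplex \(\Sigma \ni x\) the identity \(\Pi_\Sigma u(x) - u(x) = \sum_j \lambda^\Sigma_j(x)\bigl(u(a^\Sigma_j) - u(x)\bigr)\) and Jensen's inequality give \(\abs{\Pi_\Sigma u(x) - u(x)}^q \le \sum_j \lambda^\Sigma_j(x)\, \abs{u(a^\Sigma_j) - u(x)}^q\). For the gradient, using \(\sum_j D\lambda^\Sigma_j = 0\), the duality relation above (which writes any vector \(w\) as \(\sum_{j = 1}^n \scalprod{w}{a^\Sigma_j - a^\Sigma_0}\, D\lambda^\Sigma_j\)) and the fundamental theorem of calculus along the edges \([a^\Sigma_0, a^\Sigma_j]\),
\[
  D\Pi_\Sigma u(x) - Du(x) = \sum_{j = 1}^n \Bigl( \int_0^1 \bigscalprod{Du\bigl(a^\Sigma_0 + t(a^\Sigma_j - a^\Sigma_0)\bigr) - Du(x)}{a^\Sigma_j - a^\Sigma_0} \dif t \Bigr) D\lambda^\Sigma_j,
\]
whence \(\abs{D\Pi_\Sigma u(x) - Du(x)} \le C(n) \sum_{j = 1}^n \int_0^1 \abs{Du\bigl(a^\Sigma_0 + t(a^\Sigma_j - a^\Sigma_0)\bigr) - Du(x)} \dif t\). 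Integrating over \(x \in \Sigma\), summing over \(\Sigma\) and averaging over \(z \in [0, \delta]^n\) --- raising the gradient bound to the power \(p\) first, which costs only a dimensional constant --- the crucial observation is that for fixed \(x\) the ``sampling point'' --- the vertex \(a_j(x, z)\) of the \(\mathcal{S}_{\delta, z}\)-simplex containing \(x\) in the first case, the edge point \((1 - t)\, a_0(x, z) + t\, a_j(x, z)\) in the second --- is, on each piece of a finite partition of \([0, \delta]^n\), a translate of \(z\) by a constant vector, hence a measure-preserving map of \(z\) whose image lies in \(B(x, C(n)\delta)\). Fubini's theorem therefore bounds the average of \(\int_{\Rset^n} \abs{u - \Pi_{\mathcal{S}_{\delta, z}} u}^q\) by a constant multiple of \(\int_{\Rset^n} \frac{1}{\abs{B(x, C(n)\delta)}} \int_{B(x, C(n)\delta)} \abs{u(y) - u(x)}^q \dif y \dif x \le \sup_{\abs{h} \le C(n)\delta} \norm{u(\cdot + h) - u}_{L^q(\Rset^n)}^q\), and similarly bounds the average of \(\int_{\Rset^n} \abs{D(u - \Pi_{\mathcal{S}_{\delta, z}} u)}^p\) by a constant multiple of \(\sup_{\abs{h} \le C(n)\delta} \norm{Du(\cdot + h) - Du}_{L^p(\Rset^n)}^p\); since \(u \in L^q(\Rset^n)\), \(Du \in L^p(\Rset^n)\) and \(p, q < \infty\), both tend to \(0\) as \(\delta \to 0\), which is the required reduction.

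The genuine obstacle --- and the reason a per-simplex Bramble--Hilbert estimate does not suffice on its own --- is that for \(p \le n\) or \(q \le n\) the function \(u\) need not be continuous, so \(\Pi_{\mathcal{S}} u\) is built from values of \(u\) at vertices and, through \(D\Pi_\Sigma u\), of \(Du\) along edges, i.e.\ from a set of measure zero on which no Sobolev embedding controls them. Averaging over the translation parameter \(z\) is precisely what removes this difficulty: after Fubini each such lower-dimensional sampling is replaced by an honest integral over \(\Rset^n\), and the whole estimate is then powered only by the continuity of translations in \(L^p\) and \(L^q\), which needs neither a Sobolev embedding nor the density of smooth functions. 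The points requiring care are thus (i) the measure-theoretic lemma that \(\Pi_{\mathcal{S}_{\delta, z}} u\) is meaningful and the edgewise fundamental theorem of calculus is valid for a.e.\ \(z\), and (ii) the bookkeeping of the finitely many piecewise-translation maps \(z \mapsto a_j(x, z)\), with constants depending only on \(n\) and \(\mathcal{S}_0\); the remaining manipulations are Jensen's and H\"older's inequalities and the elementary geometry of nondegenerate simplices.
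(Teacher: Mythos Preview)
Your strategy is correct and matches the paper's in its architecture: translate-and-scale a fixed triangulation, average over the translation parameter, and reduce everything to the continuity of translations in \(L^p\) and \(L^q\). The \(L^q\) estimate for \(u-\Pi_{\mathcal S}u\) is handled identically in both arguments.

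The genuine difference is in how you represent \(D(\Pi_\Sigma u)\). The paper proves a Sobolev-type integral formula on a simplex (its Lemma~2.1), which yields a \emph{volume} representation
\[
  D(\Pi_\Sigma u)=\fint_\Sigma K_\Sigma[Du],
\]
with a kernel \(K_\Sigma\) that is singular like \(\abs{a_i-y}^{1-n}\) at each vertex; the averaging over \(h\) is then used to tame this singular weight. You instead use the one-dimensional fundamental theorem of calculus along the \emph{edges}, writing \(u(a_j)-u(a_0)=\int_0^1 Du\bigl(a_0+t(a_j-a_0)\bigr)[a_j-a_0]\dif t\) together with the dual-basis identity \(Du(x)=\sum_{j=1}^n Du(x)[a_j-a_0]\,D\lambda_j\); the averaging over \(z\) is then used to turn these line samples of \(Du\) into honest volume integrals. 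Both routes land on the same final bound \(\sup_{\abs{h}\le C\delta}\norm{Du(\cdot+h)-Du}_{L^p}^p\).

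What each buys: your edge-integral approach is more elementary---no singular kernel, no Sobolev representation formula---but it requires, in point (i), the ACL property of \(u\) along almost every edge and the compatibility of the edge trace with the Lebesgue-point value at the vertices (both true for a.e.\ \(z\), as you note). The paper's kernel approach needs only that \(Du/(1-\beta_i)^{n-1}\) be integrable on each simplex (again generic in \(z\)), and its Lemma~2.2 is of independent interest as a pointwise formula for the interpolant's gradient. Your bookkeeping claim in (ii)---that the sampling maps \(z\mapsto(1-t)a_0(x,z)+ta_j(x,z)\) are, on a partition of the period cell into boundedly many pieces, translations landing in \(B(x,C\delta)\)---is correct for a \(\Zset^n\)-periodic \(\mathcal S_0\) and gives the constants depending only on \(n\) and \(\mathcal S_0\).
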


In order to have a well-defined interpolation \(\Pi_\mathcal{S} u\), we assume in this statement that the function \(u\) is \emph{defined at every point} of the space \(\Rset^n\), that is, we do not consider \(u\) as an equivalence class in the Lebesgue space \(L^p (\Rset^n)\). It will appear in the proof that the vertices of the triangulation will be Lebesgue points of the function \(u\).

The weakness of such a statement is the dependence of the triangulation \(\mathcal{S}\) on the function \(u\). 
Due to the minimal regularity assumptions on the function \(u\), this dependence is unavoidable.
The reader will see in the proof that the triangulation is obtained by dilation and translation from a fixed dilation and that a large set of triangulations can be used to approximate a given function \(u\).

\section{Proof of the theorem}

In order to prove theorem~\ref{theoremMain}, we study representation formulas of the affine interpolant. We begin by a Sobolev integral representation formula on a simplex.

Given a convex set \(C \subset \Rset^n\) and point \(a \in C\), the associated Minkowski gauge \(\gamma^C_a : \Rset^n \to [0, \infty]\) is defined \citelist{\cite{Clarke2013}*{p.\thinspace{}40}\cite{Rockafellar1970}*{p.\thinspace{}28}} for every \(x \in \Rset^n\) by 
\[
  \gamma^C_a (x) = \inf \bigl\{\lambda \in (0, \infty) \st a + \lambda^{-1} (x - a) \in C\bigr\}.
\]
We observe that \(\gamma^C_a (x) \le 1\) if and only if \(x\) lies in the closure of \(C\).

\begin{lemma}[Sobolev integral formula on a convex set]
\label{lemmaConvexIntegralRepresentation}
Let \(C \subset \Rset^n\) be a compact convex set with nonempty interior, let \(a \in C\) and let \(u \in L^1 (C)\). If  the function \(u\) is weakly differentiable, if \(a\) is a Lebesgue point of \(u\) and if
\[
  \int_{C} \frac{\abs{D u}}{(\gamma^C_a)^{n - 1}} < \infty,
\]
then 
\[
  u (a) -  \fint_{C} u = \frac{1}{n} \fint_{C} D u (x) [a - x]\Bigl(\frac{1}{\gamma^C_a (x)^{n}} - 1\Bigr)\dif x.
\]
\end{lemma}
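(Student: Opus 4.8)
The plan is to integrate the one--dimensional fundamental theorem of calculus along the half--lines issued from \(a\). After a translation we may assume \(a = 0\) and write \(\gamma := \gamma^C_0\). Introduce spherical coordinates \(x = r\omega\), with \(r = \abs{x} > 0\), \(\omega = x/\abs{x} \in S^{n - 1}\) and \(\dif x = r^{n - 1}\dif r\dif\omega\), and for \(\omega \in S^{n - 1}\) set \(\rho(\omega) := \max\,\{r \ge 0 \st r\omega \in C\}\); since \(C\) is compact and convex with \(0 \in C\), one has, up to a Lebesgue--null set, \(C = \{r\omega \st \omega \in S^{n - 1},\ 0 \le r \le \rho(\omega)\}\), while \(\gamma(r\omega) = r/\rho(\omega)\).

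First I would translate the assumptions into these coordinates. The change of variables gives
\[
  \int_{C} \frac{\abs{Du}}{\gamma^{n - 1}} = \int_{S^{n - 1}} \rho(\omega)^{n - 1} \Bigl(\int_0^{\rho(\omega)} \abs{Du(t\omega)}\dif t\Bigr)\dif\omega ,
\]
so that \(\int_0^{\rho(\omega)}\abs{Du(t\omega)}\dif t < \infty\) for almost every \(\omega\) with \(\rho(\omega) > 0\); since \(\rho(\omega) \le \diam C\), the same computation also yields \(\int_{S^{n - 1}}\rho(\omega)^{n}\bigl(\int_0^{\rho(\omega)}\abs{Du(t\omega)}\dif t\bigr)\dif\omega < \infty\), which is exactly the bound that will legitimise the Fubini exchange below. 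Next, since \(u\) is weakly differentiable, for almost every \(\omega \in S^{n - 1}\) the function \(t \mapsto u(t\omega)\) is absolutely continuous on every compact subinterval of \((0, \rho(\omega))\) with derivative \(t \mapsto Du(t\omega)[\omega]\) — the absolute continuity of \(u\) on almost every line, read in radial coordinates — and, combined with the integrability just obtained, it is in fact absolutely continuous on all of \([0, \rho(\omega)]\). Hence the radial limit \(\ell(\omega) := \lim_{t \to 0^+}u(t\omega)\) exists and satisfies \(\ell(\omega) = u(r\omega) - \int_0^r Du(t\omega)[\omega]\dif t\) for every \(r \in (0, \rho(\omega)]\).

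The step I expect to be the main obstacle is the identification \(\ell(\omega) = u(0)\) for almost every \(\omega\): this is precisely where the hypothesis that \(a\) is a Lebesgue point is needed. For almost every such \(\omega\) we have \(\abs{u(r\omega) - u(0)} \to \abs{\ell(\omega) - u(0)}\) as \(r \to 0^+\), so after the rescaling \(r = \varepsilon s\) and a dominated convergence argument,
\[
  \lim_{\varepsilon \to 0^+} \frac{n}{\varepsilon^n}\int_0^{\varepsilon}\abs{u(r\omega) - u(0)}\,r^{n - 1}\dif r = \abs{\ell(\omega) - u(0)} .
\]
Integrating this in \(\omega\), applying Fatou's lemma and returning to Cartesian coordinates, \(\int_{\{\rho > 0\}}\abs{\ell(\omega) - u(0)}\dif\omega\) is at most \(\liminf_{\varepsilon \to 0}\frac{n}{\varepsilon^n}\int_{C \cap B(0, \varepsilon)}\abs{u - u(0)}\), and the latter vanishes because \(0\) is a Lebesgue point of \(u\). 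Consequently \(\ell(\omega) = u(0)\), i.e.\ \(u(0) - u(r\omega) = -\int_0^r Du(t\omega)[\omega]\dif t\) for almost every \(\omega\) and every \(r \in (0, \rho(\omega)]\).

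It then remains to integrate this identity over \(C\). In polar coordinates,
\[
  \abs{C}\Bigl(u(0) - \fint_C u\Bigr) = -\int_{S^{n - 1}}\int_0^{\rho(\omega)}\Bigl(\int_0^r Du(t\omega)[\omega]\dif t\Bigr)r^{n - 1}\dif r\dif\omega ,
\]
and exchanging the order of integration over \(\{0 < t < r < \rho(\omega)\}\) — licit by the integrability bound obtained above — the inner integral becomes \(\int_t^{\rho(\omega)} r^{n - 1}\dif r = (\rho(\omega)^n - t^n)/n\). Writing \(\rho(\omega)^n - t^n = t^n\bigl(\gamma(t\omega)^{-n} - 1\bigr)\) and \(t\,Du(t\omega)[\omega] = Du(t\omega)[t\omega]\), and changing variables back to \(x = t\omega\), the right--hand side collapses to \(-\tfrac{1}{n}\int_C Du(x)[x]\bigl(\gamma(x)^{-n} - 1\bigr)\dif x\); dividing by \(\abs{C}\) and undoing the translation (so that \(-x\) becomes \(a - x\) and \(\gamma\) becomes \(\gamma^C_a\)) gives the claimed formula. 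Apart from the Lebesgue point identification, the remaining work is routine verification of the change--of--variables formulas and of the integrability needed for Fubini and for the convergence in the radial limit.
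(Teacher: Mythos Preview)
Your argument is correct and is at heart the same integration-along-rays-plus-Fubini strategy as the paper's, but the packaging differs. The paper avoids spherical coordinates by introducing the one-parameter family
\[
  f(t)=\fint_{tC}u=\fint_C u(ty)\dif y,
\]
observes that \(f(0)=u(a)\) and \(f(1)=\fint_C u\), computes \(f'(t)=\tfrac{1}{t}\fint_{tC}Du(x)[x]\dif x\), and then applies Fubini over \(\{(x,t):\gamma^C_a(x)\le t\le1\}\); the inner \(t\)-integral \(\int_{\gamma^C_a(x)}^1 t^{-(n+1)}\dif t\) produces the factor \(\tfrac{1}{n}(\gamma^C_a(x)^{-n}-1)\) directly. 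This is slicker and coordinate-free, but the paper simply asserts that the Lebesgue-point hypothesis and the weighted integrability of \(Du\) make \(f\) absolutely continuous on \([0,1]\) with \(f(0)=u(a)\), without spelling out why. Your spherical-coordinate approach makes precisely this step explicit---the Fatou argument identifying the radial limits \(\ell(\omega)\) with \(u(0)\) is exactly the content hidden in the paper's one-line claim---at the price of a longer computation and an appeal to the absolute continuity of Sobolev functions on almost every ray.
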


We will be interested in the case where \(C\) is a simplex and \(a\) is one of its vertices. 
When \(C\) is a ball of centre \(a\), the formula is the classical Sobolev representation formula 
\[
  u (a) -  \fint_{B_R (a)} u = \frac{1}{n} \fint_{B_R (a)} D u (x) [a - x]\,\Bigl(\frac{R^n}{\abs{a - x}^{n}} - 1\Bigr)\dif x.
\]
When \(n = 1\) and  \(C = [a, b]\) with \(b \ge a\), the formula reduces to 
\[
  u (a) - \fint_{[a, b]} u = \fint_{[a, b]} u' (x) (b - x) \dif x,
\]
which can be obtained directly by integration by parts.

\begin{proof}%
[Proof of lemma~\ref{lemmaConvexIntegralRepresentation}]
Without loss of generality, we assume by translation that \(a = 0\).
We define the function \(f : [0, 1] \to \Rset\) for each \(t \in [0, 1]\) by 
\[
  f (t) = \fint_{t C} u.
\]
By a change of variable, for every \(t \in (0, 1]\),
\[
  f (t) = \fint_{C} u (t y) \dif y.
\]
Since \(a\) is a Lebesgue point of \(u\) and \(\abs{D u}/(\gamma^C_a)^{n - 1}\) is integrable, it follows that \(f\) is absolutely continuous and for almost every \(t \in (0, 1]\),
\[
  f' (t) = \int_{C} D u (t y)[y]\dif y = \frac{1}{t} \int_{tC} D u (x)[x]\dif x.
\]
Since \(t C = \{ x \in C \st \gamma^C_a (x) \le t\}\), we have by Fubini's theorem
\[
\begin{split}
  u (a) - \fint_{C} u &= f (0) - f (1) = -\int_0^1 f' (t)\\
  &= \int_0^1 \frac{1}{t} \int_{t C} D u (x)[a - x]\dif x \dif t\\
  &= \int_{C} D u (x)[a - x]\Bigl(\int_{\gamma^C_a (x)}^1 \frac{1}{t^{n + 1}}\dif t\Bigr) \dif x\\
  &= \frac{1}{n}\int_{C} D u (x) [a - x]\Bigl(\frac{1}{\gamma^C_a (x)^{n}} - 1\Bigr)\dif x.\qedhere
\end{split}
\]
\end{proof}

If \(S \subset \Rset^n\) is a simplex with vertices \(a_0, \dotsc, a_n\), then the \emph{barycentric coordinates} of a point \(x \in C\) \cite{Rockafellar1970}*{p.\thinspace{}7} are the unique real numbers \(\beta_0 (x), \dotsc, \beta_n (x) \in \Rset\) such that 
\begin{align*}
  \beta_0 (x) + \dotsb + \beta_n (x) &= 1&
  &\text{ and }&
  \beta_0 (x)a_0 + \dotsb + \beta_n (x)a_n &= a.
\end{align*}
We observe that \(\gamma^S_{a_i} = 1 - \beta_i (x)\) and we deduce directly from  lemma~\ref{lemmaConvexIntegralRepresentation} an integral representation of the derivative of the interpolant.

\begin{lemma}[Integral representation of the derivative of the interpolant]
\label{lemmaRepresentationDerivative}
Let \(\Sigma \subset \Rset^n\) be a \(n + 1\) nondegenerate simplex of vertices \(a_0, \dotsc, a_{n + 1}\) and let \(u \in L^1 (\Sigma)\) and let \(\Pi_\Sigma u : C \to \Rset\) be the affine interpolant of \(u\) on \(\Sigma\):
\[
  \Pi_\Sigma u = \sum_{i=0}^n u (x_i) \beta_i.
\]
If \(u\) is weakly differentiable in \(\Sigma\), if \(a_0, \dotsc, a_{n + 1}\) of \(\Sigma\) are Lebesgue points of \(u\) and if 
\[
  \sum_{i = 0}^n \int_{\Sigma} \frac{1}{(1 - \beta_i)^{n - 1}} \abs{D u} < \infty,
\]
then 
\[
  D (\Pi_\Sigma u) = \fint_{\Sigma} K_\sigma[D u],
\]
where the function \(K_\Sigma : \Sigma \times \Lin (\Rset^n; \Rset) \to \Lin (\Rset^n; \Rset)\) is defined for every \(x \in \Sigma\) and every \(\ell \in \Lin (\Rset^n; \Rset)\) by
\[
  K_\Sigma [\ell] = \sum_{i = 0}^n \Bigl(\frac{1}{(1 - \beta_i (x))^n} - 1\Bigr)\, \ell[a_i - x]\, \beta_i.
\]
\end{lemma}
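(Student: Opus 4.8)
The plan is to differentiate the interpolant term by term, to reshape the result so that Lemma~\ref{lemmaConvexIntegralRepresentation} applies at each vertex of \(\Sigma\), and then to reassemble the vertex‑by‑vertex formulas into a single integral over \(\Sigma\).

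First I would use that \(\Pi_\Sigma u = \sum_{i=0}^n u(a_i)\,\beta_i\) with each barycentric coordinate \(\beta_i\) the restriction to \(\Sigma\) of an affine function, so that \(D\beta_i\) is a constant element of \(\Lin(\Rset^n; \Rset)\) and \(D(\Pi_\Sigma u) = \sum_{i=0}^n u(a_i)\, D\beta_i\). The algebraic point is that differentiating the identity \(\beta_0 + \dotsb + \beta_n = 1\) yields \(D\beta_0 + \dotsb + D\beta_n = 0\), so that for any constant \(c \in \Rset\)
\[
  D(\Pi_\Sigma u) = \sum_{i=0}^n \bigl(u(a_i) - c\bigr)\, D\beta_i ,
\]
and I would choose \(c = \fint_\Sigma u\). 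Next I would apply Lemma~\ref{lemmaConvexIntegralRepresentation} with \(C = \Sigma\) and \(a = a_i\), for each \(i \in \{0, \dotsc, n\}\): by hypothesis \(a_i\) is a Lebesgue point of \(u\), and since \(\gamma^\Sigma_{a_i} = 1 - \beta_i\), the finiteness of \(\int_\Sigma \abs{D u}/(\gamma^\Sigma_{a_i})^{n-1}\) required there is exactly the \(i\)-th summand of the assumed bound \(\sum_{i=0}^n \int_\Sigma \abs{D u}/(1-\beta_i)^{n-1} < \infty\). The lemma then gives
\[
  u(a_i) - \fint_\Sigma u = \frac{1}{n}\fint_\Sigma D u (x)[a_i - x]\Bigl(\frac{1}{(1 - \beta_i(x))^{n}} - 1\Bigr) \dif x .
\]

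Substituting these \(n+1\) identities into the displayed expression for \(D(\Pi_\Sigma u)\) and carrying the finite sum inside the integral gives \(D(\Pi_\Sigma u) = \fint_\Sigma K_\Sigma[D u]\), where \(K_\Sigma[\ell](x)\) collects, for each \(i\), the scalar weight \((1 - \beta_i(x))^{-n} - 1\), the scalar \(\ell[a_i - x]\), and the constant linear map \(\tfrac1n D\beta_i\). Since \(\abs{a_i - x} \le \diam \Sigma\) on \(\Sigma\), the same integrability assumption that licenses each application of Lemma~\ref{lemmaConvexIntegralRepresentation} also shows that \(K_\Sigma[D u]\) is integrable on \(\Sigma\), so the interchange of the finite sum with the integral is legitimate. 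The step I would treat most carefully — and the only genuine subtlety — is this transfer of hypotheses: checking that \(\gamma^\Sigma_{a_i} = 1 - \beta_i\), so that the weight that is singular near the vertex \(a_i\) in the kernel is precisely the Minkowski‑gauge weight of Lemma~\ref{lemmaConvexIntegralRepresentation}, and making sure that, when the \(n+1\) identities are summed, the constant contributions \(-1\) and the differentials \(D\beta_i\) recombine into the announced kernel with nothing lost. Everything else is routine manipulation of the affine functions \(\beta_i\).
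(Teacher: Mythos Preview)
Your argument is correct and is exactly the ``direct'' deduction the paper has in mind: it only records the identity \(\gamma^{\Sigma}_{a_i}=1-\beta_i\) and says Lemma~\ref{lemmaRepresentationDerivative} follows from Lemma~\ref{lemmaConvexIntegralRepresentation}, while you spell out the subtraction of \(\fint_\Sigma u\) via \(\sum_i D\beta_i=0\) and the vertex-by-vertex application. Note that your computation yields the constant covector \(\tfrac{1}{n}D\beta_i\) in the kernel rather than the \(\beta_i\) printed in the statement; this is a typo in the paper (otherwise \(K_\Sigma[\ell]\) would not land in \(\Lin(\Rset^n;\Rset)\)), and your version is the right one.
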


When \(n = 1\) and \(\Sigma = [a, b]\) with \(a < b\), then the formula reduces to 
\[
  (\Pi_{\Sigma} u)' = \fint_{[a, b]} u' .
\]
Lemma~\ref{lemmaRepresentationDerivative} is reminiscent of bounds on error for the affine interpolant of twice differentiable mappings  when \(n \ge 2\) \cite{KristensenRindler},
\[
  \fint_{\Sigma} \abs{D u - D (\Pi_\Sigma u)} \le C_n (\diam S)^{n - 1} \sum_{i = 0}^n \fint_{\Sigma} \frac{\abs{D^2 u (x)}}{\abs{a_i - x}^{n - 1}}\dif x;
\]
the latter inequality can be deduced in fact from lemma~\ref{lemmaRepresentationDerivative} by the Hardy inequality with singularities at the vertices and the Poincar\'e inequality.

\begin{proof}%
[Proof of theorem~\ref{theoremMain}]
Let \(\mathcal{S}_*\) be a triangulation of \(\Rset^n\) in which all the simplices are the image under an isometry of a simplex taken in a finite set of simplices of diameter less than \(1\). (Any periodic triangulation would achieve this condition.)
We define for every \(r > 0\) and \(h \in \Rset^n\), the translated and dilated triangulation
\[
  \mathcal{S}^r_h = \{ r\Sigma + h \st \Sigma \in \mathcal{S}_*\}
\]
and we define the affine interpolant
\[
  v^r_h = \Pi_{\mathcal{S}^r_h} u.
\]
For every \(r > 0\), for almost every \(h \in \Rset^n\), the vertices of the triangulation \(\mathcal{S}^r_h\) are Lebesgue points of \(u\).
We have then for every \(\Sigma \in \mathcal{S}^r_h\), since for every \(y \in \Sigma\),
\[
  \abs{K_\Sigma (y)} \le \sum_{i = 0}^n \frac{C}{\abs{a_i - y}^{n - 1}r},
\]
by the integral representation of the derivative of the interpolant (lemma~\ref{lemmaRepresentationDerivative}) and by the H\"older inequality
\begin{equation*}
\begin{split}
  \int_{\Sigma} \abs{D u (x) - D v^r_h (x)}^p\dif x
  &= \int_{\Sigma} \Bigabs{\int_{\Sigma} K_\Sigma (y) \bigl[D u (x) - D u (y)\bigr] \dif y}^p \dif x\\
  &\le \int_{\Sigma} \Bigl(\int_{\Sigma} \abs{K_\Sigma (y)}\, \abs{D u (x) - D u (y)} \dif y\Bigr)^p \dif x\\
  &\le \int_{\Sigma} \int_{\Sigma} \abs{K_\Sigma (y)}\, \abs{D u (x) - D u (y)}^p \dif y \dif x \;\Bigl(\int_{\Sigma} \abs{K_\Sigma} \Bigr)^{p - 1}\\
  &\le \frac{C'}{r} \sum_{i=0}^n \int_{\Sigma} \int_{\Sigma} \frac{\abs{D u (x) - D u (y)}^p}{\abs{y - a_i}^{n - 1}}\dif x \dif y.
\end{split}
\end{equation*}
Since the simplices are generated by isometries and dilation from a finite number of simplices, the constant \(C\) can be taken to be independent of \(r\), \(h\) and \(\Sigma\).
If we define the function 
\[
  W (x) = \sup \Bigl\{ \frac{n + 1}{\abs{x - a}^{n - 1}} \st a \text{ is a vertex of the triangulation } \mathcal{S}_*\Bigr\}.
\]
we have, since all the simplices in \(\mathcal{S}_*\) have a diameter less than \(1\),
\[
\begin{split}
  \int_{\Sigma} \abs{D u (x) - D v^r_h (x)}^p\dif x
  &\le \frac{C'}{r^n} \int_{\Sigma} \int_{\Sigma} \abs{D u (x) - D u (y)}^p W \bigr(\tfrac{y - h}{r}\bigr)\dif y \dif x\\
  & \le \frac{C'}{r^n} \int_{\Sigma} \int_{B_r (x)} \abs{D u (x) - D u (y)}^p W \bigr(\tfrac{y - h}{r}\bigr)\dif y \dif x
\end{split}
\]

By summing the inequality over the simplices \(\Sigma\) of the triangulation \(\mathcal{S}^r_h\), we obtain 
\[
  \int_{\Rset^n} \abs{D u - D v^r_h}^p
  \le \frac{C'}{r^n} \int_{\Rset^n} \int_{B_r (x)} \abs{D u (x) - D u (y)}^p W \bigr(\tfrac{y - h}{r}\bigr)\dif y \dif x.
\]
We now integrate with respect to the translations \(h\) to obtain the estimate
\[
\begin{split}
  \fint_{B_r} \Bigl(\int_{\Rset^n} \abs{D u - D v^r_h}^p\Bigr) \dif h
  & \le \frac{C'}{r^n} \int_{\Rset^n} \int_{B_r (x)} \abs{D u (x) - D u (y)}^p \Bigl(\fint_{B_r} W \bigr(\tfrac{y - h}{r}\bigr)\dif h\Bigr) \dif y \dif x\\
  & \le C'' \int_{\Rset^n} \fint_{B_r (x)} \abs{D u (x) - D u (y)}^p \dif y \dif x\\
  & = C'' \fint_{B_r} \int_{\Rset^n} \abs{D u (x) - D u (x + h)}^p\dif x \dif h\\
  & \le C'' \sup_{h \in B_r} \int_{\Rset^n} \abs{D u (x) - D u (x + h)}^p\dif x.
\end{split}
\]
By the continuity and boundedness of translations in \(L^p (\Rset^n)\), we conclude that 
\[
  \lim_{r \to \infty} \fint_{B_r} \Bigl(\int_{\Rset^n} \abs{D u - D v^r_h}^p\Bigr) \dif h = 0.
\]

For the \(L^q\) norm, we observe that for every simplex \(\Sigma \in \mathcal{S}\) with vertices \(a_0, \dotsc, a_n\) and \(x \in \Sigma\),
\[
  \abs{u (x) - v^r_h (x)}^q \le \sum_{i = 0}^n \abs{u (x) - u (a_i)}^q
\]
Therefore, since for every \(i \in \{0, \dotsc, n\}\), \(\Sigma \subset B_r (a_i)\),
\[
  \int_{\Sigma} \abs{u - v^r_h}^q \le \sum_{i = 0}^n \int_{\Sigma} \abs{u - u (a_i)}^q
  \le \sum_{i = 0}^n \int_{B_r (a_i)} \abs{u - u (a_i)}^q
\]
By summing over the simplices \(\Sigma\) of the triangulation, we obtain
\[
  \int_{\Sigma} \abs{u - v^r_h}^q \le C \sum_{a \in \mathcal{A}^r_h} \int_{B_r (a)} \abs{u - u (a)}^q,
\]
where  \(\mathcal{A}^r_{h}\) is the set of vertices of the triangulation \(\mathcal{S}^r_h\).
By integration over \(h\), we deduce that 
\[
\begin{split}
  \fint_{B_r} \int_{\Rset^n} \abs{u - v^r_h}^q \dif h & \le C' \fint_{B_{r}} \int_{\Rset^n} \abs{u (x) - u (x + h)}^q\dif x \dif h\\
  &\le C' \sup_{h \in B_{r}} \int_{\Rset^n} \abs{u (x) - u (x + h)}^q\dif x,
\end{split}
\]
and thus 
\[
  \lim_{r \to \infty} \fint_{B_{r}} \int_{\Rset^n} \abs{u - v^r_h}^q \dif h = 0.
\]
In order to conclude, we take \(r > 0\) such that 
\[
  \fint_{B_r} \int_{\Rset^n} \abs{D u - D v^r_h}^p + \abs{u - v^r_h}^q \dif h \le \varepsilon,
\]
In particular, 
\[
  \Bigabs{ \Bigl\{h \in B_r \st \int_{\Rset^n} \abs{D u - D v^r_h}^p + \abs{u - v^r_h}^q \le \varepsilon\Bigr\}} > 0,
\]
and the conclusion follows.
\end{proof}

\section{Concluding remarks}

\subsection{Vector-valued functions}
Theorem~\ref{theoremMain} still holds for a  Sobolev vector field \(u : \Rset^n \to \Rset^m\). Moreover, all the constants appearing in the proofs do not depend on the dimension of the target space \(m\).

\subsection{Functions of bounded variation}
If \(u \in L^q (\Rset^n)\) is a function of bounded variation, that is, if \(Du\) is a vector Radon-measure then the proof above gives the existence of a triangulation \(\mathcal{S}\) of the space \(\Rset^n\) such that 
\begin{align*}
  \int_{\Rset^n} \abs{u - \Pi_{\mathcal{S}} u}^q & \le \varepsilon &
& \text{and}&
  \int_{\Rset^n} \abs{D (\Pi_\mathcal{S} u)} & \le C \int_{\Rset^n} \abs{D u},
\end{align*}
where the constant \(C\) only depends on the dimension of the space.

A better result would be an approximation in strict convergence:
\begin{equation*}
  \int_{\Rset^n} \abs{D (\Pi_\mathcal{S} u)} \le  \int_{\Rset^n} \abs{D u} + \varepsilon. 
\end{equation*}
The construction in the proof of theorem~\ref{theoremMain} satisfies this for \(n = 1\) but not for \(n \ge 2\). For example if \(n = 2\), we consider for \(u\) the characteristic function of the triangle of vertices \((0, 0)\), \((0, 1)\) and \((1, 0)\) together with the periodic triangulation generated by the triangle of vertices \((0, 0)\), \((0, 1)\) and \((1, 1)\) and the triangle of vertices \((0, 0)\), \((1, 0)\) and \((1, 1)\). 
It can be verified that
\begin{multline*}
  \liminf_{r \to \infty} \inf \Bigl\{ \int_{\Rset^2} \abs{D v^r_h} \st h \in \Rset^2 \text{ and the vertices of \(\mathcal{S}^r_h\)}\\
  \text{are Lebesgue points of \(u\)}\Bigr\} \ge 4 > 2 + \sqrt{2} = \int_{\Rset^2} \abs{D u}.
\end{multline*}
In this situation, the geometry of the triangulation is not adapted to the geometry of the jump part of the measure \(\abs{D u}\).
In fact, the known construction of approximation for functions of bounded variation relies on suitably rotated triangulations at different part of the domains to approximate the jump part of the derivative \(\abs{D u}\) \cite{KristensenRindler}.

\begin{bibdiv}

\begin{biblist}

\bib{AdamsFournier2003}{book}{
   author={Adams, Robert A.},
   author={Fournier, John J. F.},
   title={Sobolev spaces},
   series={Pure and Applied Mathematics},
   volume={140},
   edition={2},
   publisher={Elsevier/Academic Press}, 
   address={Amsterdam},
   date={2003},
   pages={xiv+305},
   isbn={0-12-044143-8},
}

\bib{BrennerScott2008}{book}{
   author={Brenner, Susanne C.},
   author={Scott, L. Ridgway},
   title={The mathematical theory of finite element methods},
   series={Texts in Applied Mathematics},
   volume={15},
   edition={3},
   publisher={Springer},
   place={New York},
   date={2008},
   pages={xviii+397},
   isbn={978-0-387-75933-3},
}

\bib{Brezis2011}{book}{
   author={Brezis, Haim},
   title={Functional analysis, Sobolev spaces and partial differential
   equations},
   series={Universitext},
   publisher={Springer},
   place={New York},
   date={2011},
   pages={xiv+599},
   isbn={978-0-387-70913-0},
}

\bib{Ciarlet1978}{book}{
   author={Ciarlet, Philippe G.},
   title={The finite element method for elliptic problems},
   series={Studies in Mathematics and its Applications},
   volume={4},
   publisher={North-Holland},
   place={Amsterdam},
   date={1978},
   pages={xix+530},
   isbn={0-444-85028-7},
}

\bib{Clarke2013}{book}{
   author={Clarke, Francis},
   title={Functional analysis, calculus of variations and optimal control},
   series={Graduate Texts in Mathematics},
   volume={264},
   publisher={Springer},
   place={London},
   date={2013},
   pages={xiv+591},
   isbn={978-1-4471-4819-7},
   isbn={978-1-4471-4820-3},
}
  
\bib{Dacorogna2008}{book}{
   author={Dacorogna, Bernard},
   title={Direct methods in the calculus of variations},
   series={Applied Mathematical Sciences},
   volume={78},
   edition={2},
   publisher={Springer},
   place={New York},
   date={2008},
   pages={xii+619},
   isbn={978-0-387-35779-9},
}

\bib{EkelandTemam1976}{book}{
   author={Ekeland, Ivar},
   author={Temam, Roger},
   title={Convex analysis and variational problems},
   series={Studies in Mathematics and its Applications}, 
   volume={1},
   publisher={North-Holland},
   place={Amsterdam},
   date={1976},
}
  
\bib{Friedrichs1944}{article}{
   author={Friedrichs, K. O.},
   title={The identity of weak and strong extensions of differential
   operators},
   journal={Trans. Amer. Math. Soc.},
   volume={55},
   date={1944},
   pages={132--151},
   issn={0002-9947},
}

\bib{KristensenRindler}{article}{
  title={Piecewise affine approximations for functions of bounded variation},
  author={Kristensen, J.},
  author={Rindler, F.},
  eprint={arXiv:1211.1792},
}

\bib{MeyersSerrin1964}{article}{
   author={Meyers, Norman G.},
   author={Serrin, James},
   title={$H=W$},
   journal={Proc. Nat. Acad. Sci. U.S.A.},
   volume={51},
   date={1964},
   pages={1055--1056},
   issn={0027-8424},
}

\bib{Rockafellar1970}{book}{
   author={Rockafellar, R. Tyrrell},
   title={Convex analysis},
   series={Princeton Mathematical Series, No. 28},
   publisher={Princeton University Press},
   place={Princeton, N.J.},
   date={1970},
   pages={xviii+451},
}

\bib{Tartar2007}{book}{
   author={Tartar, Luc},
   title={An introduction to Sobolev spaces and interpolation spaces},
   series={Lecture Notes of the Unione Matematica Italiana},
   volume={3},
   publisher={Springer},
   place={Berlin},
   date={2007},
   pages={xxvi+218},
   isbn={978-3-540-71482-8},
   isbn={3-540-71482-0},
}

\bib{Willem2013}{book}{
   author={Willem, Michel},
   title={Functional analysis},
   series={Cornerstones},
   subtitle={Fundamentals and applications},
   publisher={Birkh\"auser/Springer}, 
   address={New York},
   date={2013},
   pages={xiv+213},
   isbn={978-1-4614-7003-8},
   isbn={978-1-4614-7004-5},
}

\bib{Ziemer}{book}{
   author={Ziemer, William P.},
   title={Weakly differentiable functions},
   series={Graduate Texts in Mathematics},
   volume={120},
   subtitle={Sobolev spaces and functions of bounded variation},
   publisher={Springer},
   place={New York},
   date={1989},
   pages={xvi+308},
   isbn={0-387-97017-7},
}

\end{biblist}

\end{bibdiv}

\end{document}